\documentclass[12pt]{elsarticle}
\graphicspath{ {./figures/} }

\usepackage[a4paper, total={6.5in, 9in}]{geometry}
\usepackage{caption}
\usepackage[hyphens]{url}
\usepackage{hyperref}
\usepackage{amsmath}
\usepackage{amssymb}
\usepackage{amsthm}         
\usepackage{mathtools}      
\usepackage{graphicx}
\usepackage[percent]{overpic} 
\usepackage{empheq}
\usepackage{tcolorbox}
\usepackage{mathrsfs}
\usepackage{pifont}
\usepackage{algorithm}
\usepackage{algpseudocode}
\usepackage{natbib}
\usepackage{cleveref}
\usepackage{lscape}
\usepackage{tabularx}
\usepackage{soul} 
\usepackage[table]{xcolor} 
\usepackage{changepage, threeparttable} 
\usepackage{multirow}
\usepackage{adjustbox}
\usepackage{smartdiagram}
\usepackage{longtable}
\usepackage{enumitem} 
\usepackage{sectsty} 
\usepackage{subcaption} 
\usepackage{booktabs}
\usepackage{rotating} 
\usepackage{bbding}

\usepackage{tikz}
\usepackage{pgfplots}
\pgfplotsset{compat=1.18} 

\newtheorem{theorem}{Theorem}[section]
\newtheorem{proposition}{Proposition}

\theoremstyle{definition}
\newtheorem{definition}{Definition}

\hypersetup{
    colorlinks=false,
    urlcolor=blue,
    linkcolor=blue,
    citecolor=red
}

\begin{document}
\begin{frontmatter}

\title{Convexity and SOS-Convexity of Sum of Separable and Biquadratic Quartic Polynomials and Optimization}

\author[label1]{Shaon Naskar}
\ead{snaskar@isihyd.ac.in}

\author[label1]{Sujeet Kumar Singh\corref{cor1}}
\ead{sksinghma209@gmail.com, sujeet@isihyd.ac.in}

\cortext[cor1]{Corresponding author}

\address[label1]{Indian Statistical Institute, Hyderabad Unit, India, 500007}


\begin{abstract}
Determining whether multivariate polynomials of degree four or higher are nonnegative and convex is a strongly NP-hard problem. To mitigate these computational difficulties, sum-of-squares (SOS) convexity has been proposed as a tractable algebraic relaxation that yields a checkable sufficient condition for convexity and can be expressed as a semidefinite program (SDP). In this work, we introduce a structured subclass of quartic polynomials, called the Sum of Separable and Biquadratic (SPBQ) forms, and conduct a systematic analysis of the connection between convexity and SOS-convexity within this class. Specifically, we show that every convex SPBQ polynomial is necessarily SOS-convex when the associated biquadratic form has size $n \times 2$. We then construct an explicit SPBQ example with a $3 \times 3$ biquadratic form that is convex but fails to be SOS-convex. Finally, we examine both unconstrained and constrained optimization problems involving SPBQ polynomials, demonstrate notable computational benefits compared to general SOS-based methods, and illustrate their use in convex polynomial regression and fluid dynamics.
\end{abstract}


\begin{keyword}
\end{keyword}
\end{frontmatter}
\makeatletter
\def\ps@pprintTitle{%
 \let\@oddhead\@empty
 \let\@evenhead\@empty
 \def\@oddfoot{}%
 \let\@evenfoot\@oddfoot}
\makeatother

\section{Introduction}
A homogeneous polynomial (also called a form) $f:\mathbb{R}^n\to\mathbb{R}$ is called nonnegative or positive semidefinite (psd) if $f(x)\ge 0$ for all $x\in\mathbb{R}^n$. It is called a sum of squares (sos) if it can be expressed as $f = \sum_{i=1}^m f_i^2$ for some forms $f_1(x), f_2(x), \dots, f_m(x)$. We write $\Sigma_{n,2d}$ for the set of sos forms of degree $2d$ in $n$ variables, and $N_{n,2d}$ for the set of psd forms of degree $2d$ in $n$ variables. In 1888, Hilbert \cite{hilbert1888darstellung} proved that $N_{n,2d}$ and $\Sigma_{n,2d}$ coincide if and only if $n=1$, $d=1$, or $(n,2d)=(3,4)$. Determining when these two sets are equal is a classical problem in real algebraic geometry. A major breakthrough was achieved in 1967 when Motzkin presented the first explicit example of a nonnegative form that is not a sum of squares: the ternary sextic \citep{motzkin1967arithmetic}
\[ M(x,y,z) = x^4 y^2 + x^2 y^4 - 3 x^2 y^2 z^2 + z^6. \]
Since then, many further examples have been found (see, for instance, \cite{choi1977extremal, choi1977old, choi1980real, reznick2000some, robinson1973some}).

The difference between psd polynomials and sum of squares(SOS) polynomials is not just of theoretical interest; it has significant consequences for computation. It is known that deciding whether a multivariate polynomial with degree four or higher is non-negative is an NP-hard problem \cite{murty1987some}. By contrast, checking whether a polynomial is a sum of squares can be posed as a semidefinite programming (SDP) feasibility problem, which can be solved in polynomial time to arbitrary precision \cite{vandenberghe1996semidefinite}. Motivated by the classical fact that all nonnegative univariate ($n=1$) and quadratic ($d=2$) polynomials inherently allow a sum-of-squares (SOS) decomposition, Ahmadi et al. \cite{ahmadi2023sums} have recently proposed a new structured class of polynomials, termed SPQ (sums of separable and quadratic polynomials). Using this framework, they carried out a systematic study of the relationships among nonnegativity, SOS representability, and convexity, and examined a variety of practical applications.
Alongside nonnegativity, polynomial convexity is a fundamental property with far-reaching consequences in both theoretical analysis and practical optimization. Most importantly, convexity can convert optimization problems that are otherwise intractable into ones that are computationally manageable. For instance, although the unconstrained global minimization of a general polynomial is NP-hard \cite{parrilo2001minimizing}, knowing in advance that the polynomial is convex guarantees that every local minimizer is also a global minimizer. As a result, even basic gradient descent methods can be relied upon to converge to the global optimum. The need to certify polynomial convexity also arises in other areas of mathematics; for example, the $d$-th root of a homogeneous, degree-$d$, positive definite polynomial defines a valid norm if and only if the polynomial is convex \cite{reznick2011blenders}. Historically, understanding the computational complexity of testing polynomial convexity was a major challenge, and it was famously highlighted in 1992 as one of seven open problems in complexity theory for numerical optimization \cite{pardalos1992open}. Subsequent work showed that deciding the convexity of polynomials of degree four or higher is strongly NP-hard \cite{ahmadi2013np}. To circumvent these computational difficulties, this paper concentrates on sos-convexity, an algebraic relaxation introduced rigorously by Helton and Nie \cite{helton2010semidefinite}. By requiring the Hessian of the polynomial to admit a sum-of-squares (SOS) decomposition, sos-convexity provides a sound, efficiently checkable sufficient condition for ordinary convexity. Since deciding whether a polynomial is sos-convex can be entirely formulated and solved as a semidefinite program (SDP), it is highly advantageous computationally. In addition, it offers a clean conceptual link between the geometric perspective on convex functions and the algebraic structure of polynomials.
A natural progression from the SPQ family involves elevating the quadratic component to a quartic one. However, since determining the nonnegativity and convexity of general degree four polynomials is known to be strongly NP-hard, we restrict our attention to a specialized class of quartic polynomials known as biquadratic forms. A \textit{biquadratic form} $b(x, y)$ over the variable sets $x = (x_1, \dots, x_n)^{T}$ and $y = (y_1, \dots, y_m)^{T}$ is defined as:
\begin{equation}
b(x, y) = \sum_{i \le j} \sum_{k \le l} \alpha_{ijkl} x_i x_j y_k y_l.
\end{equation}
A defining characteristic of $b(x, y)$ is its structural symmetry: it reduces to a quadratic form in $y$ when $x$ is fixed, and conversely, it becomes a quadratic form in $x$ when $y$ is fixed. Although the nonnegativity and SOS representations of biquadratic forms have been explored in the existing literature \cite{calderon1973note, choi1975positive, el2011class}, our work introduces the \textit{sum of separable and biquadratic(SPBQ)} forms, whose definition is given in \ref{def:SPBQ}. Specifically, this research focuses on investigating the convexity properties of the SPBQ family and exploring their practical applications. Unless otherwise specified, we use the term SPBQ to refer exclusively to SPBQ forms.

\begin{definition}\label{def:SPBQ}
A polynomial 
$p : \mathbb{R}^n \times \mathbb{R}^m \rightarrow \mathbb{R}$ 
is called a \emph{separable plus biquadratic (SPBQ)} form if
\[
p(x, y) = s(x, y) + b(x, y),
\]
where $s(x, y)$ is a separable form, i.e.,
\[
s(x, y) = \sum_{i=1}^n s_i(x_i) + \sum_{j=1}^m t_j(y_j),
\]
for some univariate forms
$s_i, t_j : \mathbb{R} \rightarrow \mathbb{R}$, 
and $b(x, y)$ is a biquadratic form.
\end{definition}


The remainder of the paper is structured as follows. In Section \ref{convex spbq}, we establish that every convex SPBQ polynomial is sos-convex, under the assumption that the biquadratic component of the SPBQ polynomial is of order $n\times 2$. In Section \ref{spbq counter}, we provide an example of a convex SPBQ polynomial that is not sos-convex, where the associated biquadratic part has order $3\times 3$ and is itself sos. Section \ref{optimization} is devoted to the study of convex SPBQ polynomial optimization. Specifically, Section \ref{uncons opt} examines unconstrained SPBQ polynomial optimization and shows that our proposed SPBQ techniques offer a computational time advantage over conventional sos-based methods, and Section \ref{cons opt} deals with that of constrained SPBQ polynomial optimization. We conclude by presenting two applications of SPBQ polynomials in convex polynomial regression and fluid dynamics, which are discussed in detail in Section \ref{regression app.} and Section \ref{fluid app.}, respectively.

\section{When Convexity of SPBQ polynomials implies SOS-Convexity?}\label{convex spbq}
We now state our main result, which establishes that convexity and sos-convexity are equivalent for SPBQ polynomials. In particular, we identify a specific subclass of SPBQ polynomials whose convexity guaranties that every member of this class is sos-convex.
\begin{theorem}
 Consider a biquadratic form \(b(x,y)\) of bidegree \( (n,2) \), and a nonnegative separable form in the variables \((x_1, x_2, \dots, x_n, y_1, y_2)\). Define the polynomial \(f\) by
\[
f(x,y) = \sum_{i=1}^n \alpha_i x_i^4 + \sum_{j=1}^2 \beta_j y_j^4 + b(x,y),
\]
where the coefficients satisfy \(\alpha_i \geq 0\) and \(\beta_j \geq 0\) for all \(1 \le i \le n\) and \(1 \le j \le 2\). Then the form \(f\) is convex if and only if it is sos-convex, i.e., the Hessian of \(f\) admits a sum-of-squares factorization.
\end{theorem}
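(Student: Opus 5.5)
The direction ``sos-convex $\Rightarrow$ convex'' is immediate: if $u^T\nabla^2 f(x,y)\,u$ is a sum of squares in $(x,y,u)$, then it is nonnegative, so the Hessian is psd everywhere. The work is in the converse. I would prove it by reducing sos-convexity of $f$ to a nonnegativity statement for a biquadratic-type form in which one block of variables has size two, and then invoking the classical fact, due to Calder\'on \cite{calderon1973note}, that psd biquadratic forms of size $n\times 2$ are sos.

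First I would write $u=(v,w)$ with $v\in\mathbb{R}^n$ and $w\in\mathbb{R}^2$, and expand the Hessian form $H(x,y;v,w)=u^T\nabla^2 f\,u$. The separable part contributes $12\sum_i\alpha_i x_i^2v_i^2+12\sum_j\beta_j y_j^2w_j^2$. The biquadratic part $b(x,y)=y^TB(x)y$, with $B(x)$ quadratic in $x$, contributes three pieces:
\begin{itemize}
\item $2\,w^TB(x)w$ from $\nabla_{yy}$;
\item $y^T\big(D^2_vB\big)y$ from $\nabla_{xx}$, which is a biquadratic form in $(v,y)$;
\item a mixed term $4\,w^T\big(D_vB(x)\big)y$, which is bilinear in $(x,v)$ and bilinear in $(y,w)$.
\end{itemize}
The key structural observation is that every term of $H$ is quadratic in the pair $z=(x,v)\in\mathbb{R}^{2n}$ and quadratic in the pair $t=(y,w)\in\mathbb{R}^{4}$. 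Hence $H$ is a biquadratic form in $(z,t)$, of size $2n\times4$. That is too large for Calder\'on's theorem directly, so I need to exploit the finer structure.

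To reduce to an $n\times2$ situation, I would view $H$ as a quadratic form in $t=(y,w)\in\mathbb{R}^4$ with a $4\times4$ matrix $M(x,v)$ whose entries are quadratic in $(x,v)$. Convexity of $f$ says exactly that $M(x,v)\succeq0$ for all $(x,v)$. The goal is then an SOS matrix factorization $M=\sum_k L_k^TL_k$ with $L_k$ linear in $(x,v)$. One route I would try is homogeneity. Since $f$ is a quartic form, the Hessian is invariant under $(x,y,v,w)\mapsto(\lambda x,\lambda y,v,w)$, and by Euler's identity the vector $(x,y)$ is a distinguished direction. This suggests restricting to directions $u$ and using a change of variables that decouples $v$ from $x$. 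The separable terms $\alpha_ix_i^2v_i^2$ are diagonal and sos on their own. They serve as the slack that must absorb the off-diagonal $D_vB$ coupling, which is the same mechanism as in the SPQ result of \cite{ahmadi2023sums}.

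An alternative, and my preferred concrete plan, is to use Schur-complement reasoning in the $2$-dimensional block $w$. I would write $M=\begin{pmatrix}P & Q\\ Q^T & R\end{pmatrix}$, where $R=2B(x)+12\,\mathrm{diag}(\beta_jy_j^2)$ is the $2\times2$ block in $w$. Being $2\times2$ and psd, this block, together with the block in $y$, can be handled via $2\times2$ psd matrix polynomials, which are sos in $n$ variables by Calder\'on's result (the $n\times2$ case). I would show that $M\succeq0$ forces a decomposition $M=M_1+M_2$, where $M_1$ is block-diagonal with $2\times2$ psd blocks that are biquadratic of size $n\times2$ (so sos by \cite{calderon1973note}) and $M_2$ is built from the separable slack and is manifestly sos.

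The main obstacle is the mixed term $4w^T(D_vB)y$: it must be split between the two blocks without destroying positive semidefiniteness. I would first test the split on the extreme rays $B(x)=\ell(x)^2 C$ and then use convexity along the lines $x+\tau v$ to justify the general split. I expect this splitting step to be where the bidegree $(n,2)$ hypothesis is genuinely essential, consistent with the $3\times3$ counterexample announced in Section \ref{spbq counter}.
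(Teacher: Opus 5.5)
Your proposal has a genuine gap, and it begins with the structural observation the rest is built on. It is not true that every term of $H$ is quadratic in $z=(x,v)$ and quadratic in $t=(y,w)$. The separable contributions $12\alpha_i x_i^2v_i^2$ and $12\beta_j y_j^2w_j^2$ have bidegree $(4,0)$ and $(0,4)$ in $(z,t)$. So there is no $4\times 4$ matrix $M(x,v)$ with $H=t^TM(x,v)t$, and the reformulation ``$f$ convex iff $M(x,v)\succeq 0$'' is unavailable. Your own block $R=2B(x)+12\,\mathrm{diag}(\beta_jy_j^2)$ depends on $y$, which is part of $t$. You are conflating $M(x,v)$ with the Hessian $\nabla^2 f(x,y)$, of which $R=\nabla^2_{yy}f$ is a block. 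This is not cosmetic: your description is accurate only when all $\alpha_i,\beta_j$ vanish, and then the statement is trivial. Indeed, a convex $b$ with $b(x,0)=b(0,y)=0$ satisfies $0\le b(x,y)\le \tfrac12 b(2x,0)+\tfrac12 b(0,2y)=0$. The separable slack is the whole point. The correct bookkeeping is that $H$ is biquadratic in $((x,y),(v,w))$ of size $(n+2)\times(n+2)$, where Calder\'on's theorem does not apply directly.

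Second, the decisive step is only announced, and it is the entire content of the theorem. That step is ``$M\succeq 0$ forces $M=M_1+M_2$'', with $M_1$ block-diagonal in $(y,w)$ and $M_2$ built from the separable slack. As described, this split fails even for sos-convex $f$. Take $n=1$ and $f=x_1^4+y_1^4+y_2^4+6x_1^2y_1^2=\tfrac12[(x_1+y_1)^4+(x_1-y_1)^4]+y_2^4$. Its Hessian form is $12(x_1v_1+y_1w_1)^2+12(x_1w_1+y_1v_1)^2+12y_2^2w_2^2$. Your mixed term $4w^T(D_vB)y$ equals $48x_1y_1v_1w_1$. A block-diagonal $M_1$ contributes nothing to it, and the slack $12x_1^2v_1^2+12y_1^2w_1^2$ dominates at most $24|x_1y_1v_1w_1|$. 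The other half must be absorbed through the square $(x_1w_1+y_1v_1)^2$, which is built from $b$ and couples the $y$- and $w$-blocks.

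The extreme-ray reduction does not rescue this. Convexity is not additive over $b=\sum_kB_k^2$, because all pieces draw on the same slack. Also, the generators of the cone are $(x^TAy)^2$, with $B(x)=A^Txx^TA$, which is not of the form $\ell(x)^2C$ when $\mathrm{rank}\,A=2$.

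For comparison, the paper first gets $b\succeq 0$ from convexity: at $(x_0,0)$ in direction $(0,y_0)$ the Hessian form equals $2b(x_0,y_0)$. It then applies Calder\'on to $b$ itself. It writes $u^T\nabla^2 f\,u$ as the separable squares plus $2\sum_k(u^T\nabla B_k)^2$ plus the indefinite remainder $2\sum_k B_k\,u^T\nabla^2B_k\,u$. Finally it argues via $2\times 2$ principal minors that an unabsorbable remainder destroys convexity. That last step is only sketched there, and $2\times 2$ minors cannot detect the obstruction in general. For example, $x_1^4+y_1^4+y_2^4+5x_1^2y_1^2+5x_1^2y_2^2$ has all $2\times 2$ principal minors of its Hessian nonnegative everywhere, yet $\det\nabla^2 f(1,1,1)=-2112$. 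So the absorption argument you are missing cannot simply be borrowed from the paper either.
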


\begin{proof}
We consider the homogeneous family of separable plus biquadratic polynomials defined as follows:
\begin{equation}\label{spbq}
f(x,y) = \sum_{i=1}^n \alpha_i x_i^4 + \sum_{j=1}^2 \beta_j y_j^4 + b(x,y),
\end{equation}
where $x\in \mathbb{R}^n, y\in\mathbb{R}^{2}$ and $b(x,y)$ is a nonnegative biquadratic polynomial of order $n\times 2$, with $\alpha_i\geq 0, \beta_j\geq 0, \forall i,j$.

We use $H_f (x,y)$ to denote the Hessian of $f$. The Hessian matrix $H_f$ is an $(n+2)\times(n+2)$ matrix whose entries are all quadratic forms. The polynomial $f$ is convex if and only if the polynomial $z^T H_f(x,y)z$ is psd, where $z=(z_x^T,z_y^T)^T$, $z_x\in\mathbb{R}^n$ and $z_y\in\mathbb{R}^2$. We denote the Hessian of the separable part by $H_{sep}(x,y)$ and the Hessian of $b(x,y)$ by $H_b(x,y)$. Hence $H_f(x,y) = H_{sep}(x,y) + H_b(x,y)$. First, we consider the structure of $H_b(x,y)$.

Observe that if we define
\[
[P(x)]_{ij}=\frac{\partial^2 b(x,y)}{\partial y_i \partial y_j},
\]
then the matrix $P(x)$ depends only on $x$. Moreover,
\begin{equation}\label{P(x)}
\frac{1}{2}y^T P(x) y = b(x,y). 
\end{equation}

Similarly, define
\[
[Q(y)]_{ij}=\frac{\partial^2 b(x,y)}{\partial x_i \partial x_j}.
\]
Then $Q(y)$ depends only on $y$, and we have
\begin{equation}\label{Q(y)}
\frac{1}{2}x^T Q(y) x = b(x,y). 
\end{equation}

Putting these together, the Hessian of $b(x,y)$ can be written as
\begin{equation}\label{H_b}
H_b(x,y)=
\begin{pmatrix}
Q(y) & R(x,y) \\
R^T(x,y) & P(x)
\end{pmatrix}.
\end{equation}
In general, the matrix \(R(x,y)\) is not symmetric. The entries of \(R(x,y)\) are square-free monomials, each of which is a multiple of \(x_i y_j\) for some indices \(i,j\) satisfying \(1 \le i \le n\) and \(1 \le j \le 2\).

The Hessian \(H_{\mathrm{sep}}(x,y)\) associated with the separable component is given by:
\begin{equation}\label{H_sep}
    \begin{pmatrix}
        12\alpha_1x_1^2 & 0 & \dots & 0 & 0 & 0\\
        0 & 12\alpha_2x_2^2 & \dots & 0 & 0 & 0\\
        \vdots & \vdots & \ddots & \vdots & \vdots & \vdots\\
        0 & 0 & \dots & 12\alpha_n x_n^2 & 0 & 0\\
        0 & 0 & \dots & 0 & 12\beta_1y_1^2 & 0\\
        0 & 0 & \dots & 0 & 0 & 12\beta_2y_2^2\\
    \end{pmatrix}.
\end{equation}

Now we show that if $b(x,y)$ is not positive semidefinite, then $f(x,y)$ is not convex. 
Indeed, suppose that there exists $x_0\in\mathbb{R}^n, y_0\in\mathbb{R}^2$ such that
\[
b(x_0,y_0) < 0 .
\]
Consider a vector $z$ with components $z_x=0$ and $z_y=y_0$. Then
\[
z^T H(x,y) z 
\bigg|_{z_x=0,\; x=x_0,\; y=0,\; z_y=y_0}
= y_0^T P(x_0) y_0
= 2b(x_0,y_0) < 0 .
\]
Hence, the Hessian has a negative quadratic form in this direction, which implies that $f(x,y)$ is not convex.

On the other hand, suppose that $b(x,y)$ is psd; we will prove that $z^T H(x,y) z$ is psd and hence $f(x,y)$ is convex. Moreover, we will show that whenever $z^T H(x,y) z$ is psd it will be sos and hence $f(x,y)$ is sos-convex.

By Calder\'{o}n's Theorem \cite{calderon1973note}, since $y \in \mathbb{R}^2$, any non-negative biquadratic form $b(x,y)$ can be exactly decomposed into a sum of squares of bilinear forms:
\begin{equation}
    b(x,y) = \sum_{k=1}^m B_k(x,y)^2
\end{equation}
where $B_k(x,y) = \sum_{i=1}^n (c_{k,i} x_i y_1 + d_{k,i} x_i y_2)$ for some real coefficients $c_{k,i}$ and $d_{k,i}$.

Now, observe that $B_k(x,y)$ can also be expressed as 
\[
B_k(x,y) = (x;y)^TM_k(x;y)
\]
where $(x;y)^T =(x_1,x_2,\dots,x_n,y_1,y_2)$ and $M_k$ is a scalar matrix of order $(n+2)\times (n+2)$. Furthermore,
\[
\begin{aligned}
    &\frac{\partial^2 }{\partial x_i^2}B_k^2(x,y) = 2\left(\frac{\partial}{\partial x_i}((x;y)^TM_k(x;y))\right)^2\\
    &\frac{\partial^2 }{\partial x_i\partial x_j}B_k^2(x,y) = 2\left(\frac{\partial}{\partial x_i}( (x;y)^TM_k(x;y))\right)\left(\frac{\partial}{\partial x_j}( (x;y)^TM_k(x;y))\right)\\
   &\frac{\partial^2 }{\partial y_i^2}B_k^2(x,y) = 2\left(\frac{\partial}{\partial y_i}( (x;y)^TM_k(x;y))\right)^2\\
    &\frac{\partial^2 }{\partial y_i\partial y_j}B_k^2(x,y) = 2\left(\frac{\partial}{\partial y_i}( (x;y)^TM_k(x;y))\right)\left(\frac{\partial}{\partial y_j}( (x;y)^TM_k(x;y))\right)\\
    & \frac{\partial^2 }{\partial x_i\partial y_j}B_k^2(x,y) = 2\left(\frac{\partial}{\partial x_i}((x;y)^TM_k(x;y))\right)\left(\frac{\partial}{\partial y_j}( (x;y)^TM_k(x;y))\right) + 2s\big((x;y)^TM_k(x;y)\big)
\end{aligned}
\]
So, the Hessian of $b(x,y)$ can be rewritten as
\begin{equation}\label{H_bnew}
    H_b(x,y) = \sum_{k=1}^m H_{b_k} + \sum_{\text{cross}=1}^m H_{b_{\text{cross}}} ,
\end{equation}
where $H_{b_k} = 2G_{b_k}G_{b_k}^T$,
\[
G_{b_k}=\begin{pmatrix}
    \frac{\partial}{\partial x_1}((x;y)^TM_k(x;y))\\
    \frac{\partial}{\partial x_2}((x;y)^TM_k(x;y))\\
    \vdots\\
     \frac{\partial}{\partial x_n}((x;y)^TM_k(x;y))\\
      \frac{\partial}{\partial y_1}((x;y)^TM_k(x;y))\\
       \frac{\partial}{\partial y_2}((x;y)^TM_k(x;y))
\end{pmatrix}
\]
and the matrix $H_{b_{\text{cross}}}$ contains square-free terms, like $x_iy_j$, where $1\leq i\leq n, 1\leq j\leq 2$.
So,
\[
    z^TH_bz = 2\sum_{k=1}^m(z^TG_{b_k})^2 + 2\sum_{\text{cross}=1}^m(z^TH_{b_{\text{cross}}}z) 
\]
and
\begin{equation}\label{sos-con}
    z^TH_fz = \sum_{i=1}^n 12\alpha_i x_i^2z_{x_i}^2 + \sum_{j=1}^2 12\beta_j y_j^2z_{y_j}^2 + 2\sum_{k=1}^m(z^TG_{b_k})^2 + 2\sum_{\text{cross}=1}^m(z^TH_{b_{\text{cross}}}z).
\end{equation}

Clearly, the first three terms of equation \eqref{sos-con} are sos and the last term is a square-free term. Before going to the proof, we first give a summary of the intuition behind the rest of the proof. It is obvious that if the last term of \eqref{sos-con}, i.e., $2\sum_{\text{cross}=1}^m(z^TH_{b_{\text{cross}}}z)$, can be paired with the remaining three sos terms to form a new sos term, then $z^TH_fz$ will be sos. However, when at least one term of $2\sum_{\text{cross}=1}^m(z^TH_{b_{\text{cross}}}z)$ remains unpaired, then $z^TH_fz$ will not be sos. In that case, it may happen that $z^TH_fz$ is psd but not sos. We know that if a polynomial is convex, then each principal minor of the corresponding Hessian matrix of the polynomial should be nonnegative. But if any principal minor of the Hessian matrix fails to be psd, then the polynomial will not be convex. We will show that if there is any square-free term remaining, then $z^TH_fz$ will not be psd and hence $f$ will not be convex.

We shall now provide a formal proof of the theorem. The term $\sum_{\text{cross}=1}^m(z^TH_{b_{\text{cross}}}z)$ contains four different types of elements, i.e., $a_1x_iy_iz_{x_i}z_{y_i}$, $a_2x_iy_jz_{x_i}z_{y_t}(j\neq t)$, $a_3x_iy_jz_{x_s}z_{y_j}(i\neq s)$, and $a_4x_iy_jz_{x_s}z_{y_t}(i\neq j\neq s\neq t)$, where $a_1,a_2,a_3,a_4$ are real constants.

Observe that each principal minor of order $1\times 1$ of the Hessian matrix $H_f(x,y)$ will be $12\alpha_ix_i^2 + Q_{ii}(y)$ for $i=1,2,\dots,n$, and $12\beta_jy_j^2 + P_{jj}(x)$ for $j=1,2$. Following equations \eqref{P(x)} and \eqref{Q(y)}, and since $b(x,y)$ is sos, we can say that $Q_{ii}(y)\geq 0$ and $P_{jj}(x)\geq 0$, $\forall i,j$. Now from (Th. $2.2$, \cite{ahmadi2023sums}) we can say that all the principal minors of order $1\times 1$ of the Hessian matrix $H_f(x,y)$ are sos.

Now we will consider the principal minors of order $2\times 2$ of $H_f$ and show that whenever at least one element of $\sum_{\text{cross}=1}^m(z^TH_{b_{\text{cross}}}z)$ remains unpaired, the corresponding principal minor will not be psd and therefore the polynomial $f$ will not be convex.

First consider if the element $a_1x_iy_jz_{x_i}z_{y_j}$ remains unpaired in equation \eqref{sos-con}. We will consider the following $2\times 2$ minor:
\[
\begin{aligned}
A_1&=\begin{vmatrix}
    \frac{\partial^2}{\partial x_i^2}(f) & \frac{\partial^2}{\partial x_i \partial y_j}(f)\\
    \frac{\partial^2}{\partial x_i \partial y_j}(f) & \frac{\partial^2}{\partial y_j^2}(f)
\end{vmatrix}\\
& = \begin{vmatrix}
    12\alpha_i x_i^2 + Q_{x_{i}x_{i}}(y_1,y_2) & R_{x_{i}y_{j}}(x,y)\\
    R_{x_{i}y_{j}}^T(x,y) & 12\beta_j y_j^2 + P_{y_{j}y_{j}}(x_1,x_2,\dots,x_n)
\end{vmatrix}
\end{aligned}
\]
Observe that the elements of $Q_{x_{i}x_{i}}, P_{y_{j}y_{j}}$ and $R_{x_{i}y_{j}}$ come from $\sum_{k=1}^m H_{b_k} + \sum_{\text{cross}=1}^m H_{b_{\text{cross}}}$. Since the term $a_1x_iy_jz_{x_i}z_{y_j}$ remains unpaired, except for $x_i$, set all the components of $x$ equal to $0$ and let $x_i=1$. Without loss of generality, set $y_2 =0$ and $y_1=1$. So $(\text{coefficient of } Q_{x_i x_i} +12\alpha_i)(\text{coefficient of } P_{y_1 y_1} +12\beta_1) < \big(\text{sum of coefficients of } R_{x_i y_1}(x_i,y_1)\big)^2$. Then the minor will become:
\[
A_1=\begin{vmatrix}
     12\alpha_i  + Q_{x_{i}x_{i}} & R_{x_{i}y_{1}}\\
     R_{x_{i}y_{1}}^T & 12\beta_1 + P_{y_{1}y_{1}}
\end{vmatrix}
\]
Therefore, $\det(A_1)$ is a scalar, and $\det(A_1) < 0$. Thus, in this case, $f$ will not be convex.

Next, consider if the element $a_2x_iy_jz_{x_i}z_{y_t}(j\neq t)$ remains unpaired in equation \eqref{sos-con}. We consider the following $2\times 2$ minor:
\[
\begin{aligned}
A_2&=\begin{vmatrix}
    \frac{\partial^2}{\partial x_i^2}(f) & \frac{\partial^2}{\partial x_i \partial y_t}(f)\\
    \frac{\partial^2}{\partial x_i \partial y_t}(f) & \frac{\partial^2}{\partial y_t^2}(f)
\end{vmatrix}\\
& = \begin{vmatrix}
    12\alpha_i x_i^2 + Q_{x_{i}x_{i}}(y_1,y_2) & R_{x_{i}y_{t}}(x,y)\\
    R_{x_{i}y_{t}}^T(x,y) & 12\beta_t y_t^2 + P_{y_{t}y_{t}}(x_1,x_2,\dots,x_n)
\end{vmatrix}
\end{aligned}
\]
Since the term $a_2x_iy_jz_{x_i}z_{y_t}$ remains unpaired, except for $x_i$, set all the components of $x$ equal to $0$ and let $x_i=1$. Without loss of generality, set $y_t = y_2 =0$. Then the minors become
\[
A_2=\begin{vmatrix}
    12\alpha_i  + Q_{x_{i}x_{i}}(y_1) & R_{x_{i}y_{2}}(y_1)\\
    R_{x_{i}y_{2}}^T(y_1) &  P_{y_{2}y_{2}}
\end{vmatrix}
\]
and $\det(A_2)$ be a quadratic function of the form $l - my_1^2$ and this takes negative values when $y_1 > \pm\sqrt{\frac{l}{m}}$. Therefore, $f$ is also non-convex in this case.

For the remaining two elements $a_3x_iy_jz_{x_s}z_{y_j}(i\neq s)$ and $a_4x_iy_jz_{x_s}z_{y_t}(i\neq j\neq s\neq t)$, similar arguments show that $f$ is not convex.

When these four elements appear as a linear combination in equation \eqref{sos-con}, we can choose any one of them and proceed similarly, showing that $z^TH_f z$ will not be psd and therefore $f$ will not be convex. This completes the proof.
\end{proof}

\section{A Convex SPBQ polynomial which is not SOS-Convex}\label{spbq counter}

In this section, we introduce a polynomial that is convex but not sos-convex. The earliest example of such a polynomial was provided by Ahmadi et al. \citep{ahmadi2012convex}. For biquadratic polynomials, Choi was the first to construct a nonnegative yet not sos biquadratic polynomial of minimal order \cite{choi1975positive}. In [Theorem $2.3$, \cite{ahmadi2013np}], the authors describe a method to obtain convex but not sos-convex polynomials starting from nonnegative but not sos biquadratic polynomials. In contrast, our construction yields a convex but not sos-convex polynomial by using an sos biquadratic polynomial of size $3\times 3$ together with a separable term, as detailed below:

\[
    B^*(x,y) = \sum_{i=1}^3 \Big( 2 x_i^2 y_i^2 - 2 x_i^2 y_i y_{i+1} - 2 x_i x_{i+1} y_i^2 + 3 x_i^2 y_{i+1}^2 + 3 x_{i+1}^2 y_i^2 \Big)
\]
\[
    f_{sep}(x,y) = \sum_{i=1}^3 \left(x_i^4 + y_i^4\right) 
    \]

The target polynomial is parameterized by $\gamma > 0$ as $f(x,y) = f_{sep}(x,y) + \gamma B^*(x,y)$. For our counterexample, we fix $\gamma = 0.1$. Our example was derived using sum-of-squares (SOS) optimization software.

\begin{theorem}
    The polynomial $f(x,y) = f_{sep}(x,y) + 0.1 B^*(x,y)$ is convex.
\end{theorem}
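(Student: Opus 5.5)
We prove the statement by showing directly that the quadratic form $z^TH_f(x,y)z$ is nonnegative for all $(x,y,z)$, where $z=(u,v)$ with $u,v\in\mathbb{R}^3$. Indices are read cyclically, so $x_4=x_1$ and $y_4=y_1$. For any biquadratic form $b$, expanding $b(x+tu,y+tv)$ to second order in $t$ gives
\[
z^TH_bz=2\,b(u,y)+2\,b(x,v)+8\,\tilde b(x,u;y,v),
\]
where $\tilde b$ is the full polarization, i.e.\ bilinear in $(x,u)$ and in $(y,v)$ with $\tilde b(x,x;y,y)=b(x,y)$. Hence
\[
z^TH_fz=12\sum_{i=1}^3\left(x_i^2u_i^2+y_i^2v_i^2\right)+0.2\,B^*(u,y)+0.2\,B^*(x,v)+0.8\,\tilde B^*(x,u;y,v).
\]
The plan is to show that the cross term $0.8\,\tilde B^*$ is dominated by the other three pieces, using only termwise AM--GM. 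The smallness of $\gamma=0.1$ leaves ample room for this.

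\textbf{Step 1: a uniform lower bound on $B^*$.} We first show that $B^*$ is not merely sos but satisfies
\[
B^*(x,y)\ \ge\ c\sum_{j,k} x_j^2y_k^2
\]
for an explicit $c>0$, at least over the monomials $x_j^2y_k^2$ that actually occur in $\tilde B^*$. To do this we group the negative terms $-2x_i^2y_iy_{i+1}$ and $-2x_ix_{i+1}y_i^2$ with the positive diagonal terms $2x_i^2y_i^2$, $3x_i^2y_{i+1}^2$ and $3x_{i+1}^2y_i^2$. Completing squares gives, for example,
\[
\epsilon\,x_i^2y_i^2-2x_i^2y_iy_{i+1}+\tfrac{1}{\epsilon}\,x_i^2y_{i+1}^2=x_i^2\left(\sqrt{\epsilon}\,y_i-\tfrac{1}{\sqrt{\epsilon}}\,y_{i+1}\right)^2,
\]
and similarly for the $x_ix_{i+1}y_i^2$ term. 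We choose the splitting parameters so that a positive fraction of every diagonal coefficient remains after the squares are removed. We then apply this bound with $(x,y)$ replaced by $(u,y)$ and by $(x,v)$.

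\textbf{Step 2: bounding the cross term monomial by monomial.} Every monomial in $\tilde B^*(x,u;y,v)$ has the form $x_au_b\,y_cv_d$, and it arises by polarizing one monomial of $B^*$. We split these monomials into two types.
\begin{enumerate}
\item[(a)] Monomials with $a=b$ and $c=d$. We bound these by
\[
|x_au_ay_cv_c|\le \tfrac12\left(x_a^2u_a^2+y_c^2v_c^2\right)\quad\text{when } c=a,
\]
so they are absorbed by the separable contribution $12\sum_i\left(x_i^2u_i^2+y_i^2v_i^2\right)$. When $c\neq a$, we instead use
\[
|x_au_ay_cv_c|\le \tfrac12\left(x_a^2v_c^2+u_a^2y_c^2\right).
\]
\item[(b)] Monomials with $a\neq b$ or $c\neq d$. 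These come from $-2x_ix_{i+1}y_i^2$ or $-2x_i^2y_iy_{i+1}$. We bound them by
\[
|x_au_by_cv_d|\le \tfrac12\left(x_a^2v_d^2+u_b^2y_c^2\right),
\]
so they are absorbed by the lower bounds from Step 1 on $0.2\,B^*(x,v)$ and $0.2\,B^*(u,y)$.
\end{enumerate}

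\textbf{Step 3: coefficient bookkeeping.} We tabulate, for each target square (such as $x_a^2u_a^2$, $x_a^2v_d^2$ or $u_b^2y_c^2$), the total coefficient it must absorb. These coefficients are of order $0.8\cdot 2$ or $0.8\cdot 3$ times small multiplicities. We then check that each is at most the coefficient available: either $12$ from the separable part, or $0.2c$ from Step 1.

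I expect Step 3 to be the main obstacle. The separable budget of $12$ is generous. However, the mixed squares $x_a^2v_d^2$ and $u_b^2y_c^2$ are controlled only through $0.2\,B^*$, so their budget is small. A crude choice of $c$ may therefore be too weak. If that happens, we avoid AM--GM on these terms and instead treat $0.2\,B^*(u,y)+0.2\,B^*(x,v)+0.8\,\tilde B^*$ together. The method is to fix the separable part aside, write this combination as a quadratic form in the vector of bilinear monomials $(x_au_b,\,y_cv_d,\,x_av_d,\,u_by_c)$, and exhibit an explicit positive semidefinite Gram matrix for it. This would be obtained numerically, rationalized, and then verified by exact arithmetic. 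The separable terms would be added back as extra diagonal mass to make the Gram matrix positive semidefinite.
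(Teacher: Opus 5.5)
There is a genuine gap. Your strategy, including the fallback, can only ever produce an SOS certificate for $z^TH_fz$ itself, and no such certificate exists. Completing squares in Step 1, the bounds $|AB|\le\frac12(A^2+B^2)$ in Step 2, and a positive semidefinite Gram matrix in the bilinear monomials are all SOS identities. So if any version succeeded, you would have shown that $f$ is sos-convex, which is exactly what the paper's next theorem rules out. This is not an artifact of the paper's argument; it can be checked directly.

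The obstruction is also independent of $\gamma$, so smallness of $\gamma$ gives no room. The monomials $x_a^2u_b^2$ ($a\ne b$) and $y_c^2v_d^2$ ($c\ne d$) do not occur in $z^TH_fz$. Hence $x_au_b$ and $y_cv_d$ with distinct indices must be dropped from any Gram basis, and every type (b) monomial can only be formed as $(x_av_d)(u_by_c)$. It can therefore only be balanced against $x_a^2v_d^2$ and $u_b^2y_c^2$. Their coefficients come from $0.2B^*$ and scale with $\gamma$ exactly as the cross terms do; the separable budget $12$ never reaches them.

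Concretely, write $G[m,m']$ for the Gram entry indexed by monomials $m,m'$. In any Gram matrix $G$ for $z^TH_fz$, the following entries are forced:
\begin{itemize}
\item the diagonal entries at $x_av_a$ and $u_ay_a$ equal $0.4$;
\item the diagonal entries at $x_iv_{i+1}$ and $x_{i+1}v_i$ equal $0.6$;
\item the terms $-0.8\,x_iu_iy_iv_{i+1}$, $-0.8\,x_{i+1}u_iy_iv_i$ and $-0.8\,x_iu_iy_{i+1}v_i$ force $G[x_iv_{i+1},u_iy_i]=G[x_{i+1}v_i,u_iy_i]=G[x_iv_i,u_iy_{i+1}]=-0.4$.
\end{itemize}
Test the principal submatrix on $(u_iy_i,\,x_iv_{i+1},\,x_{i+1}v_i)$ with the vector $(2,1,1)$. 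This gives $2G[x_iv_{i+1},x_{i+1}v_i]-0.4\ge 0$. Since $x_ix_{i+1}v_iv_{i+1}$ has coefficient $0$, it follows that $G[x_iv_i,x_{i+1}v_{i+1}]\le -0.2$ for each cyclic $i$.

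Now take $w=e_{x_1v_1}+e_{x_2v_2}+e_{x_3v_3}$. Then $w^TGw\le 3\cdot 0.4-6\cdot 0.2=0$, so positive semidefiniteness forces $Gw=0$. But $(Gw)_{u_1y_2}=G[x_1v_1,u_1y_2]=-0.4$, because $G[x_2v_2,u_1y_2]=G[x_3v_3,u_1y_2]=0$ (the monomials $x_2u_1y_2v_2$ and $x_3u_1y_2v_3$ do not occur). So Step 3 must fail, and the rationalized Gram matrix you plan to find does not exist.

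The missing idea is a certificate of nonnegativity that is not an SOS of $z^TH_fz$ itself. The paper multiplies by the positive form $\sum_i(x_i^2+y_i^2)$ and exhibits an exact rational SOS decomposition of $\bigl(\sum_i(x_i^2+y_i^2)\bigr)z^TH_fz$. This is symmetry-reduced to two $73\times 73$ blocks and certified by rational $LDL^T$ factorizations with strictly positive pivots. It gives $z^TH_fz\ge 0$ away from the origin, and hence everywhere by continuity.
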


\begin{proof}
    Let $H(x,y)$ be the Hessian matrix of $f(x,y)$. We show that $f(x,y)$ is convex from the fact that
    \[
    (x_1^2+x_2^2 +x_3^2 + y_1^2+y_2^2 + y_3^2)H(x,y)=N^T(x,y)N(x,y)
    \]
for some polynomial matrix $N(x,y)$, or equivalently,
\begin{equation}\label{counter}
(x_1^2+x_2^2 +x_3^2 + y_1^2+y_2^2 + y_3^2)z^TH(x,y)z
\end{equation}
is an sos in $\mathbb{R}[x,y,z]$, which shows that $H(x,y)$ is a PSD polynomial matrix. We provide an explicit sos representation in terms of rational Gram Matrices for the polynomial \eqref{counter} in Appendix A \ref{Appendix:A}.
\end{proof}

\begin{theorem}
    The polynomial $f(x,y) = f_{sep}(x,y) + 0.1 B^*(x,y)$ is not sos-convex.
\end{theorem}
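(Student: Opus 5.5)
To show that $f = f_{sep} + 0.1B^*$ is not sos-convex, I will use SDP duality: exhibit a linear functional that certifies $z^TH(x,y)z$ is not a sum of squares. By definition, $f$ is sos-convex exactly when the polynomial $p(x,y,z) = z^TH(x,y)z$ (biquadratic in $((x,y),z)$, quadratic in each block) lies in the cone $\Sigma$ of sums of squares of polynomials that are bilinear in $(x,y)$ and $z$. Newton polytope considerations force any sos decomposition of $p$ to use only such bilinear monomials. So the monomial vector is $v(x,y,z) = \big((x,y)\otimes z\big)$ of length $6\cdot 6 = 36$, and $p$ is sos iff there is a $36\times 36$ matrix $Q \succeq 0$ with $p = v^TQv$.

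The plan is to produce a dual certificate: a linear functional $\ell$ on the coefficients of biquadratic forms in $((x,y),z)$, equivalently a symmetric moment matrix $M$ indexed by the bilinear monomials, such that:
\begin{enumerate}[label=(\roman*)]
\item $M \succeq 0$, with $M$ consistent, i.e.\ entries depend only on the product monomial;
\item $\ell(p) = \langle M, Q\rangle < 0$ for every Gram matrix $Q$ of $p$.
\end{enumerate}
Since the Gram matrices of $p$ form an affine family $Q_0 + \mathcal{L}$, where $\mathcal{L}$ consists of matrices with $v^TLv \equiv 0$, consistency of $M$ gives $\langle M, L\rangle = 0$. Hence $\ell(p) = \langle M, Q_0\rangle$ is well defined. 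If some $Q\succeq 0$ existed, we would have $\langle M, Q\rangle \ge 0$, a contradiction. The concrete steps are:
\begin{enumerate}[label=(\alph*)]
\item Compute $H$ and $p$ explicitly, listing its coefficients.
\item Solve numerically the dual SDP: minimize $\ell(p)$ subject to $M\succeq 0$, consistency, and normalization $\operatorname{tr} M = 1$. This gives a numerical $M$ with a strictly negative objective.
\item Round $M$ to a rational matrix, projecting onto the consistency subspace, possibly adding a small multiple of a positive definite consistent matrix such as the moment matrix of the uniform measure on the sphere.
\item Verify exactly that $M \succeq 0$, via a rational $LDL^T$ factorization or by showing all eigenvalues are nonnegative, and that $\ell(p) < 0$ in exact arithmetic.
\end{enumerate}
I would record this rational $M$ in an appendix, in parallel with Appendix A for convexity.

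To shrink the problem, I would exploit the symmetry of $B^*$ and $f_{sep}$ under the simultaneous cyclic shift $(x_1,x_2,x_3,y_1,y_2,y_3,z)\mapsto$ shifted indices. Averaging $M$ over this $\mathbb{Z}_3$ action keeps it feasible and preserves the objective. The same holds for sign symmetries: the sign changes $x_i\mapsto -x_i$ together with the corresponding $z_{x_i}$ preserve $p$ only partially, because of the $x_ix_{i+1}y_i^2$ terms, so I would check which sign subgroup survives. Block-diagonalizing by these symmetries reduces the $36\times36$ check to smaller blocks, making exact verification manageable.

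The main obstacle is step (c): the numerically optimal $M$ is typically rank-deficient, so naive rounding destroys positive semidefiniteness. I would first try to choose the dual solution in the relative interior, for example by maximizing $\lambda_{\min}(M)$ subject to $\ell(p)\le -\epsilon$ for a modest $\epsilon$. This gives slack so that rounding to small denominators preserves $M\succ 0$ strictly. If that fails, I would identify the kernel exactly, which is plausibly spanned by vectors vanishing on points $(x,y,z)$ where $p=0$, and round within the orthogonal complement.
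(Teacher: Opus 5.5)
Your dual-certificate route is sound in principle and genuinely different from the paper's.

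The paper argues by hand, in four steps:
\begin{enumerate}
\item Since $x_i^2z_{x_j}^2$ and $y_i^2z_{y_j}^2$ ($i\neq j$) do not occur in $p$, no square can contain $x_iz_{x_j}$ or $y_iz_{y_j}$.
\item Each square is written as $(s_k+t_k)^2$, with $s_k\in\mathrm{span}\{x_iz_{x_i},y_iz_{y_i}\}$ and $t_k\in\mathrm{span}\{x_iz_{y_j},y_iz_{x_j}\}$.
\item The products $s_kt_k$ land in monomial classes absent from $p$, so $p=\sum_k s_k^2+\sum_k t_k^2$.
\item Coefficients are matched to get an explicit formula for $\sum_k t_k^2$, which is then evaluated at a point.
\end{enumerate}
This is short and needs no computer, but step 4 is fragile. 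The nine monomials $x_iy_jz_{x_i}z_{y_j}$ arise both as $(x_iz_{x_i})(y_jz_{y_j})$ and as $(x_iz_{y_j})(y_jz_{x_i})$, and the $t$-block has its own Gram freedom. So $\sum_k t_k^2$ is not determined by $p$, and a bad value for one representative says nothing about the others. (By direct substitution, the paper's displayed $t$-part is in fact $4.8>0$ at the point it uses.)

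The consistency condition on your $M$ makes $\ell$ blind to exactly this freedom, so your certificate is rigorous once found. The cost is that it is machine-generated.

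The two approaches combine well:
\begin{itemize}
\item The paper's elimination cuts your basis from $36$ to $24$ monomials.
\item The sign symmetries that survive are the global flips $(x,z_x)\mapsto(-x,-z_x)$ and $(y,z_y)\mapsto(-y,-z_y)$. These split the Gram matrix into a $6\times 6$ block with diagonal $12$ and an $18\times 18$ block on $x_iz_{y_j},y_iz_{x_j}$, which is precisely the paper's $s/t$ split.
\item Besides the cyclic $\mathbb{Z}_3$, $p$ is invariant under $(x,y,z_x,z_y)\mapsto(y,x,z_y,z_x)$ and under $(x,y,z_x,z_y)\mapsto(z_y,z_x,y,x)$.
\end{itemize}
With all of these, a certificate can be supported on the $18\times18$ block and decomposes into blocks of size at most $3$. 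That is plausibly small enough to write down explicitly rather than leaving it as rational data in an appendix.
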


\begin{proof}

Suppose 
\[
p = z^T \nabla^2 f z = \sum_i p_i^2,
\]
where each $p_i(x,y)$ is a bilinear form. We prove that the polynomial $z^T \nabla^2 f z$ in 12 variables $(x_1,x_2,x_3,y_1,y_2,y_3,z_{x_1},z_{x_2},z_{x_3},z_{y_1},z_{y_2},z_{y_3})$ of degree $4$ is not sos. The explicit expression of $z^T \nabla^2 f z$ is given in Appendix B \ref{Appendix:B}.

Observe that the terms $x_i^2 z_{x_j}^{2}$, where $i\neq j \in \{1,2,3\}$, and $y_m^2 z_{y_n}^{2}$, where $m\neq n \in \{1,2,3\}$ are absent in $p$, which means they are also absent in $p_i^2$, and hence the terms are absent in $p_i$. Rewrite $p_i = s_i + t_i$, where $s_i$ contains only the terms $x_i z_{x_i}, y_i z_{y_i}$ for $i=1,2,3$, and $t_i$ contains only the terms $x_i z_{y_j}, y_iz_{x_j}$ for $i,j =1,2,3$.

Now, equating the coefficients of $p = \sum_i p_i^2 = \sum_i (s_i + t_i)^2$ leads to the following:

\[
    2s_i t_i =0
\]

\begin{align*}
    s_i^2 =\; &+ 0.4\Big[x_1z_{x_1} + y_2z_{y_2}\Big]^2
+ 0.4\Big[x_2z_{x_2} + y_3z_{y_3}\Big]^2
+ 0.4\Big[x_3z_{x_3} + y_1z_{y_1}\Big]^2 \\
&+ 0.4\Big[x_2z_{x_2} + y_1z_{y_1}\Big]^2
+ 0.4\Big[x_3z_{x_3} + y_2z_{y_2}\Big]^2
+ 0.4\Big[x_1z_{x_1} + y_3z_{y_3}\Big]^2 \\
&+ 0.4\Big[x_1z_{x_1} + y_2z_{y_2}\Big]^2
+ 0.4\Big[x_2z_{x_2} + y_3z_{y_3}\Big]^2
+ 0.4\Big[x_3z_{x_3} + y_1z_{y_1}\Big]^2 \\
&+ 0.4\Big[x_2z_{x_2} + y_1z_{y_1}\Big]^2
+ 0.4\Big[x_3z_{x_3} + y_2z_{y_2}\Big]^2
+ 0.4\Big[x_1z_{x_1} + y_3z_{y_3}\Big]^2 \\
&+ 0.2\Big[x_1z_{x_1} + y_1z_{y_1}\Big]^2
+ 0.2\Big[x_1z_{x_1} + y_2z_{y_2}\Big]^2
+ 0.2\Big[x_2z_{x_2} + y_2z_{y_2}\Big]^2 \\
&+ 0.2\Big[x_2z_{x_2} + y_3z_{y_3}\Big]^2
+ 0.2\Big[x_3z_{x_3} + y_3z_{y_3}\Big]^2
+ 0.2\Big[x_3z_{x_3} + y_1z_{y_1}\Big]^2 \\
&+ 0.2\Big[x_1z_{x_1} + y_1z_{y_1}\Big]^2
+ 0.2\Big[x_2z_{x_2} + y_1z_{y_1}\Big]^2
+ 0.2\Big[x_2z_{x_2} + y_2z_{y_2}\Big]^2 \\
&+ 0.2\Big[x_3z_{x_3} + y_2z_{y_2}\Big]^2
+ 0.2\Big[x_3z_{x_3} + y_3z_{y_3}\Big]^2
+ 0.2\Big[x_1z_{x_1} + y_3z_{y_3}\Big]^2 \\
&+ 9.6\Big(x_1^2z_{x_1}^2 + x_2^2z_{x_2}^2 + x_3^2z_{x_3}^2
+ y_1^2z_{y_1}^2 + y_2^2z_{y_2}^2 + y_3^2z_{y_3}^2\Big).
    \end{align*}

\begin{align*}
     t_i^2 =\; & 0.2\Big[(y_1 - y_2)z_{x_1} + x_1(z_{y_1} - z_{y_2})\Big]^2 \\
&+ 0.2\Big[(y_2 - y_3)z_{x_2} + x_2(z_{y_2} - z_{y_3})\Big]^2 \\
&+ 0.2\Big[(y_3 - y_1)z_{x_3} + x_3(z_{y_3} - z_{y_1})\Big]^2 \\
&+ 0.2\Big[(x_1 - x_2)z_{y_1} + y_1(z_{x_1} - z_{x_2})\Big]^2 \\
&+ 0.2\Big[(x_2 - x_3)z_{y_2} + y_2(z_{x_2} - z_{x_3})\Big]^2 \\
&+ 0.2\Big[(x_3 - x_1)z_{y_3} + y_3(z_{x_3} - z_{x_1})\Big]^2 \\
&+ 0.4y_2^2z_{x_1}^2 + 0.4x_1^2z_{y_2}^2
+ 0.4y_3^2z_{x_2}^2 + 0.4x_2^2z_{y_3}^2 \\
&+ 0.4y_1^2z_{x_3}^2 + 0.4x_3^2z_{y_1}^2
+ 0.4y_1^2z_{x_2}^2 + 0.4x_2^2z_{y_1}^2 \\
&+ 0.4y_2^2z_{x_3}^2 + 0.4x_3^2z_{y_2}^2
+ 0.4y_3^2z_{x_1}^2 + 0.4x_1^2z_{y_3}^2 \\
&- 0.4x_1y_2z_{x_1}z_{y_1}
- 0.4x_1y_1z_{x_1}z_{y_2}
- 0.4x_2y_3z_{x_2}z_{y_2}
- 0.4x_2y_2z_{x_2}z_{y_3} \\
&- 0.4x_3y_1z_{x_3}z_{y_3}
- 0.4x_3y_3z_{x_3}z_{y_1}
- 0.4x_1y_1z_{x_2}z_{y_1}
- 0.4x_2y_1z_{x_1}z_{y_1} \\
&- 0.4x_2y_2z_{x_3}z_{y_2}
- 0.4x_3y_2z_{x_2}z_{y_2}
- 0.4x_3y_3z_{x_1}z_{y_3}
- 0.4x_1y_3z_{x_3}z_{y_3}.
\end{align*}
The last equality is absurd, as the right hand side takes a negative value when evaluated at:
\[
(x_1 =1, x_2=0, x_3=2, y_1=1, y_2=0, y_3=-2, z_{x_1}=-1, z_{x_2}=0, z_{x_3}=2, z_{y_1}=-1, z_{y_2}=0, \\z_{y_3}=2).
\]

Therefore, we have established that $p=z^T \nabla^2 f z$ cannot be expressed as the sum of squares of bilinear forms.

\end{proof}

\section{Convex Optimization with SPBQ Polynomials}\label{optimization}

\subsection{Unconstrained Case}\label{uncons opt}
Let us consider the following unconstrained polynomial optimization problem to find the global lower bound $p^*$ and recover a guaranteed optimal solution \cite{belousov2002frank}:
\begin{equation}\label{non-neg}
\begin{aligned}
    p^* = &\sup_{\gamma\in \mathbb{R}} \gamma\\
    & \text{s.t.} \quad p(x) - \gamma \geq 0
    \end{aligned}
\end{equation}

The nonnegativity constraint in \eqref{non-neg} can be replaced with a sum-of-squares (SOS) constraint, allowing the lower bound to be computed via a single semidefinite program:
\begin{equation}\label{semi-sos}
\begin{aligned}
    p^{sos} = &\sup_{\gamma\in \mathbb{R}} \gamma\\
    & \text{s.t.}\quad  p(x) - \gamma \quad \text{is sos}
    \end{aligned}
\end{equation}

The polynomial $p(x) - \gamma$ remains convex for any scalar $\gamma$ provided that $p(x)$ itself is convex. It is well-established that $p^*=p^{sos}$ when all nonnegative convex polynomials are SOS; however, this equivalence does not hold in general \cite{blekherman2009convex}. Remarkably, $p^* = p^{sos}$ is the same exact as when $p(x)$ is a convex SPBQ polynomial whose biquadratic component is of order $n\times 2$. Therefore, we can formulate the following highly structured optimization problem:
\begin{equation}\label{Semi-SPBQ}
    \begin{aligned}
        p^* = &\sup_{\gamma\in \mathbb{R}} \gamma\\
    & \text{s.t.}\quad  p(x) - \gamma \in \Sigma_{\text{SPBQ}}
    \end{aligned}
\end{equation}
where $\Sigma_{\text{SPBQ}}$ denotes the set of convex SPBQ polynomials with biquadratic forms of order $n\times 2$.

The bounds derived from the semidefinite programs \eqref{semi-sos} and \eqref{Semi-SPBQ} coincide (in fact, they are zero in all instances, since we work with homogeneous polynomials). The key difference lies in the size of the resulting semidefinite programs, with \eqref{Semi-SPBQ} being substantially more compact. Consider a homogeneous convex SPBQ polynomial $p(x)$ in $(n+2)$ variables of degree $4$. The semidefinite constraint associated with \eqref{semi-sos} has dimension $\binom{n+3}{2}\times \binom{n+3}{2}$. By contrast, the formulation in \eqref{Semi-SPBQ} only requires univariate homogeneous SOS polynomials of degree $4$ together with a biquadratic SOS form of size $n\times 2$. Since biquadratic forms contain only mixed terms of the form $x_ix_jy_ky_l$, the dimension of the largest semidefinite constraint is dramatically reduced to $\max\{\binom{2}{1}\times \binom{2}{1},\,2n\times 2n\}$.

To illustrate the practical scale of this theoretical improvement, we compare the two formulations on randomly generated polynomials; the results are reported in Table \ref{tab:computation_times}. The numerical experiments are implemented in MATLAB using the modeling framework YALMIP \cite{lofberg2008modeling} and the interior-point solver MOSEK \cite{mosek_toolbox_9_1}. All runs are performed on a computer with a $2.81$ GHz processor and $16$ GB of RAM.

\begin{table}[htpb]
    \centering
    \renewcommand{\arraystretch}{1.3}
    \begin{tabular}{||c|c|c|c|c||}
        \hline
        $(n, m)$ & Variables ($n+m$) & General SOS Time (s) & SPBQ Time (s) & Speedup Factor \\
        \hline\hline
        $(4, 2)$  & $6$  & $0.91$ & $0.21$ & $4.33$x \\
        \hline
        $(10, 2)$ & $12$ & $20.54$ & $0.90$ & $22.8$x \\
        \hline
        $(20, 2)$ & $22$ & $76.08$ & $1.99$ & $38.23$x \\
        \hline
        $(30, 2)$ & $32$ & $893.1$      & $2.67$ & $334.49$x \\
        \hline
        $(40, 2)$ & $42$ & Out of memory     & $5.55$ & --- \\
        \hline
        $(50, 2)$ & $52$ & Out of memory     & $11.72$ & --- \\
        \hline
        $(60, 2)$ & $62$ & Out of memory     & $23.57$ & --- \\
        \hline
        $(80, 2)$ & $82$ & Out of memory     & $87.14$ & --- \\
        \hline
    \end{tabular}
    \caption{Computation times (in seconds) for the SOS method and the proposed SPBQ method}
    \label{tab:computation_times}
\end{table}

\subsection{Constrained Case}\label{cons opt}

Let us consider a constrained polynomial optimization problem (POP) formulated as
\begin{equation}\label{cons}
\begin{aligned}
\inf_{x \in \mathbb{R}^n} \quad & p(x) \\
\text{s.t.} \quad & x \in K,
\end{aligned}
\end{equation}
where the feasible region $K$ is defined by a set of polynomial inequalities,
\[
K := \left\{ x \in \mathbb{R}^n \mid g_i(x) \le 0, \; \text{for } i = 1, \dots, m \right\}.
\]

It is well-established that finding the optimal value $p^*$ of this problem is equivalent to maximizing a scalar lower bound $\gamma$, that is,
\begin{equation}
\begin{aligned}
p^* = \sup_{\gamma \in \mathbb{R}} \quad & \gamma \\
\text{s.t.} \quad & p(x) - \gamma \ge 0, \quad \forall x \in K.
\end{aligned}
\end{equation}

To address such problems computationally, Lasserre \cite{lasserre2001global} introduced a hierarchy of semidefinite programming (SDP) relaxations. Under suitable assumptions on the set $K$, this hierarchy yields a sequence of lower bounds that converges asymptotically to $p^*$. Subsequent work has thoroughly examined the finite convergence behavior of this hierarchy, especially for convex POPs \cite{de2011lasserre, lasserre2009convexity}.

In particular, when the Slater regularity condition is satisfied and the polynomials $p(x)$ and $g_i(x)$ are sos-convex, the Lasserre hierarchy attains exact convergence at the first relaxation level.\cite{lasserre2009convexity}.

\begin{proposition}\label{prop:spbq_opt}
Consider the polynomial optimization problem specified in \eqref{cons}, and assume that the constraint functions $g_1(x), \dots, g_m(x)$ satisfy the Slater regularity condition; that is, there exists a strictly feasible point $\bar{x} \in \mathbb{R}^n$ such that $g_i(\bar{x}) < 0$ for all $i \in \{1, \dots, m\}$. 

If the objective function $p(x)$ and the constraint functions $g_i(x)$ are all convex SPBQ forms, then the exact optimal value $p^*$ can be computed by solving the corresponding semidefinite programming relaxation:
\begin{equation}\label{cons spbq}
\begin{aligned}
&p^* = \sup_{\gamma \in \mathbb{R}, \; y \in \mathbb{R}^m} \quad  \gamma \\
\text{s.t.} \quad 
& p(x) - \gamma + \sum_{i=1}^m y_i g_i(x) \in \Sigma_{\text{SPBQ}}, \\
& y \ge 0.
\end{aligned}
\end{equation}
\end{proposition}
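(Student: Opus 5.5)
The plan is to combine Lagrangian duality for convex programs with the earlier result that, for SPBQ forms whose biquadratic part has size $n\times 2$, convexity implies sos-convexity. That result turns the Lasserre certificate into membership in $\Sigma_{\text{SPBQ}}$.

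\emph{Weak duality.} First I show that the value of \eqref{cons spbq} never exceeds $p^*$. Take any feasible pair $(\gamma,y)$ with $y\ge 0$. Every element of $\Sigma_{\text{SPBQ}}$ is sos-convex by the main theorem of Section~\ref{convex spbq}, hence convex and globally nonnegative. So for every $x\in K$,
\[
p(x)-\gamma \;\ge\; -\sum_i y_i g_i(x) \;\ge\; 0 .
\]
Taking the infimum over $K$ gives $\gamma\le p^*$.

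\emph{Strong duality.} Next I show that the supremum is attained at $\gamma=p^*$. Since $p$ and the $g_i$ are convex and Slater's condition holds, convex Lagrangian duality supplies multipliers $y^*\ge 0$ such that
\[
L(x):=p(x)-p^*+\sum_i y_i^* g_i(x)\;\ge\;0 \quad \text{for all } x\in\mathbb{R}^n,
\]
provided $p^*$ is finite; finiteness follows because a convex polynomial bounded below on $K$ attains its infimum. Now $L$ is a nonnegative combination of convex SPBQ forms. Separable parts add to a separable part and biquadratic parts add to a biquadratic part, so $L$ is itself an SPBQ polynomial up to a constant, and it is convex. I then apply the result of Section~\ref{convex spbq} to $L$: $L$ is sos-convex. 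By Helton--Nie \cite{helton2010semidefinite}, a nonnegative sos-convex polynomial is sos. This is exactly the first-level Lasserre exactness of \cite{lasserre2009convexity}, which needs only sos-convexity of $p$ and the $g_i$, and that is guaranteed here by Section~\ref{convex spbq}. So $(p^*,y^*)$ is feasible for \eqref{cons spbq}, and combined with weak duality this gives equality.

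\emph{Main obstacle.} The delicate step is justifying that $L$ lies in $\Sigma_{\text{SPBQ}}$ and not merely in the sos cone. Two issues arise.

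\begin{itemize}
\item The constant $-p^*$ breaks homogeneity. For homogeneous forms $p^*$ is either $0$ or the problem is degenerate, so I would handle that by homogeneity.
\item The separable coefficients of $L$ must have the sign pattern required by the theorem, namely $\alpha_i,\beta_j\ge 0$.
\end{itemize}

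For the second issue, I would first argue that convexity of each $g_i$ and of $p$ forces the quartic separable coefficients to be nonnegative: restrict the Hessian to the coordinate axis $x=t e_i$, where the biquadratic part vanishes, so convexity along that axis gives $12\alpha_i t^2\ge 0$. Nonnegative combinations preserve this sign pattern. With that, the decomposition of $L$ into separable sos pieces plus a Calder\'on sos biquadratic, together with the Hessian pairing argument of Section~\ref{convex spbq}, yields the required structured certificate.
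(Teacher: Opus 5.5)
Your outline is the paper's own argument. Slater plus convex Farkas/Lagrange duality gives $y^*\ge 0$ with $p-p^*+\sum_i y_i^*g_i\ge 0$. Nonnegative combinations of convex SPBQ forms stay convex SPBQ. The convexity/sos equivalence of Section~\ref{convex spbq} then places the result in $\Sigma_{\text{SPBQ}}$. Your explicit weak-duality direction, the Helton--Nie step and the sign check on $\alpha_i,\beta_j$ are details the paper leaves implicit.

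Two of your steps are not justified as written. In weak duality, ``sos-convex, hence convex and globally nonnegative'' does not follow: convexity does not imply nonnegativity once a constant term is allowed. If membership in $\Sigma_{\text{SPBQ}}$ only meant convexity, $p-\gamma$ would be feasible for every $\gamma$. So nonnegativity of a feasible certificate must come from $\Sigma_{\text{SPBQ}}$ being a cone of sum-of-squares certificates, or from homogeneity via the fact below. Your finiteness argument is also circular: attainment of the infimum presupposes that $p$ is bounded below on $K$.

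The missing observation, which also disposes of your ``main obstacle'', is the tangent-plane inequality at the origin. A convex quartic form $g$ has $g(0)=0$ and $\nabla g(0)=0$, hence $g(x)\ge g(0)+\nabla g(0)^{T}x=0$ on all of $\mathbb{R}^{n+2}$.
\begin{itemize}
\item Applied to $p$, together with $0\in K$, it gives $p^*=0$ directly, so no constant $-p^*$ ever appears.
\item Applied to the $g_i$, it shows that $g_i(\bar x)<0$ is impossible. Slater's condition can never hold once $m\ge 1$, so the proposition is vacuous there, and the duality step (yours and the paper's alike) never actually applies.
\end{itemize}
What does hold, with no Slater assumption, is that $(\gamma,y)=(0,0)$ is optimal for \eqref{cons spbq}. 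Evaluating any feasible certificate at the origin gives $-\gamma\ge 0$. And $p$ itself lies in the structured cone, because convexity forces $\alpha_i,\beta_j\ge 0$ (your axis argument) and $P(x)\succeq 0$ (the $y$-block of the Hessian at $y=0$). Hence $b=\tfrac12 y^{T}P(x)y$ is psd and, for size $n\times 2$, sos by Calder\'on. Neither the sos-convexity theorem of Section~\ref{convex spbq} nor Helton--Nie is needed for this.
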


\begin{proof}
    Applying the convex Farkas lemma (see, e.g., \cite{stoer2012convexity}), the presence of the Slater regularity condition ensures that a real scalar $\gamma$ constitutes a valid lower bound for the optimum $p^*$ of \eqref{cons} if and only if there exists a vector of dual multipliers $y = (y_1, \dots, y_m)^T \ge 0$ such that the expression
\[
p(x) - \gamma + \sum_{i=1}^m y_i g_i(x)
\]
is globally nonnegative. Therefore, we can rewrite the optimal value as follows:
\begin{equation}
\begin{aligned}
&p^* = \sup_{\gamma \in \mathbb{R},\, y \in \mathbb{R}^m} \quad  \gamma \\
\text{s.t.} \quad 
& p(x) - \gamma + \sum_{i=1}^m y_i g_i(x) \in N_{n+2,4}, \\
& y \ge 0.
\end{aligned}
\end{equation}

By our initial assumption, the objective function $p(x)$ and all constraint functions $g_1(x), \dots, g_m(x)$ are convex SPBQ forms. Because $y \ge 0$, any nonnegative linear combination of these functions, namely
\[
p(x) - \gamma + \sum_{i=1}^m y_i g_i(x),
\]
remains a convex SPBQ polynomial. 

This resulting expression is nonnegative if and only if it belongs to the set $\Sigma_{\text{SPBQ}}$. As a consequence, the exact optimal value $p^*$ can be reliably computed by solving the corresponding semidefinite program presented in \eqref{cons spbq}.
\end{proof}

\section{Convex SPBQ Polynomial Regression}\label{regression app.}

Convex regression is a fundamental problem in statistics and machine learning that focuses on recovering an unknown convex function 
$f: \mathbb{R}^n \rightarrow \mathbb{R}$ from $m$ noisy feature–response samples 
$\{(\mathbf{x}_i, y_i)\}_{i=1}^m$. 
To make predictions at new, unseen inputs, one seeks to fit a polynomial regressor 
$p$ of a chosen degree that faithfully reflects the structure of the data. 
This problem is commonly posed as minimizing the sum of absolute deviations between the observed responses and the predictions produced by the model.
\begin{equation}
\min_{p} \sum_{i=1}^m \left| p(\mathbf{x}_i) - y_i \right|.
\end{equation}

Imposing a convexity constraint on the regressor $p(\mathbf{x})$ guarantees that the estimated model maintains the structural characteristics of the true function $f$. 
Directly enforcing convexity, however, leads to an optimization problem that is computationally intractable. 
To overcome this difficulty, prior work has introduced tractable relaxations of the convexity constraint \cite{hildreth1954point, magnani2005tractable, curmei2025shape}. 
A commonly adopted strategy replaces strict convexity with a sum-of-squares (SOS) convexity condition, 
allowing the regression task to be solved efficiently via semidefinite programming. 
More recent research has considered alternative formulations, such as convex SPLD polynomials \cite{jiao2026spld}, 
which are especially effective when the number of samples is large and the input dimension is relatively small.

Within this general framework, one often encounters settings in which the unknown function 
$f$ is not only convex but also exhibits low-order interactions among its variables. 
Under these structural conditions, a natural strategy is to approximate $f$ with 
a convex SPBQ polynomial. Constraining the hypothesis class to SPBQ forms, where the biquadratic component has size $n\times 2$, provides 
substantial computational benefits and greatly enhances the scalability of 
the resulting regression problem.

We pose the SPBQ convex regression problem as follows:
\begin{equation}\label{spbq cons}
\begin{aligned}
\min_{p} \quad & \sum_{i=1}^m \left| p(\mathbf{x}_i) - y_i \right| \\
\text{subject to} \quad 
& p \text{ is an SPBQ polynomial }, \\
& H(\mathbf{x}) \succeq 0 .
\end{aligned}
\end{equation}

In this formulation, optimization is carried out on the coefficients of the 
polynomial $p(\mathbf{x})$, while degree $2d$ is treated as a fixed parameter. 
Here, $H(\mathbf{x})$ denotes the Hessian matrix of $p(\mathbf{x})$, and the constraint 
$H(\mathbf{x}) \succeq 0$ ensures that the resulting estimator remains convex throughout its domain.

To evaluate the performance of our approach, we adopt the numerical experimental setup introduced in \cite{ahmadi2023sums}. Following their methodology, we consider a family of convex functions defined by:
\begin{equation}
f_{a,b}(\mathbf{x}) =
\log\!\left(\sum_{i=1}^{n} a_i e^{b_i x_i}\right)
+ \mathbf{x}^T (aa^T + I)\mathbf{x}
+ b^T \mathbf{x},
\end{equation}
where the parameter vectors $a, b \in \mathbb{R}^n$ are generated with entries drawn independently and uniformly from the intervals $[0,4]$ and $[-2,2]$, respectively, and $I$ denotes the identity matrix of $n \times n$.

In our simulations, we generate $r = 100$ independent instances of these functions in dimension $n = 10$. For each instance, we construct a training dataset of size $m = 300$ and a separate test dataset of size $t = 100$. The feature vectors $\mathbf{x}_i \in \mathbb{R}^{10}$ are sampled independently of the standard multivariate normal distribution. The corresponding response variables are computed as 
\begin{equation}
y_i = f_{a,b}(\mathbf{x}_i) + \epsilon_i,
\end{equation}
where the additive noise $\epsilon_i$ is extracted independently from a standard normal distribution.

Restricting the search space to polynomials of degree $2d = 4$, we compare the performance of three regression formulations summarized in Table \ref{tab:reg_models}.

\begin{table}[h]
\centering
\begin{tabular}{|p{4cm}|p{9cm}|}
\hline
\textbf{Regression Model} & \textbf{Optimization Problem Formulation} \\
\hline

Polynomial regression &
$\displaystyle \min_{p} \sum_{i=1}^{m} |p(\mathbf{x}_i) - y_i|$ \\
\hline

SOS-convex polynomial regression &
$\displaystyle 
\min_{p} \sum_{i=1}^{m} |p(\mathbf{x}_i) - y_i|
\quad \text{s.t.} \quad
H(\mathbf{x}) \text{ is an SOS matrix}
$ \\
\hline

SPBQ convex polynomial regression &
$\displaystyle
\min_{p } \sum_{i=1}^{m} |p(\mathbf{x}_i) - y_i|
\quad \text{s.t.} \quad
H(\mathbf{x}) \succeq 0\eqref{spbq cons}
$ \\
\hline
\end{tabular}
\caption{Regression models evaluated in the numerical experiments.}
\label{tab:reg_models}
\end{table}

Each regression problem is solved using the training dataset to obtain the optimal polynomial estimator, denoted by $p^*$. To assess generalization performance, we compute the average and maximum absolute deviations over the test data set.
\begin{equation}
\text{AvgDev}_{a,b}
= \frac{1}{t} \sum_{i=1}^{t}
\left| f_{a,b}(\mathbf{x}_i) - p^*(\mathbf{x}_i) \right|,
\end{equation}
\begin{equation}
\text{MaxDev}_{a,b}
= \max_{1 \le i \le t}
\left| f_{a,b}(\mathbf{x}_i) - p^*(\mathbf{x}_i) \right|.
\end{equation}

As illustrated in Figure \ref{fig:histograms}, standard unconstrained polynomial regression suffers from severe overfitting, producing test errors several orders of magnitude higher than the structured approaches. This extreme divergence is mathematically expected, as the number of polynomial coefficients for $n=10$ and $2d=4$ is $\binom{10+4}{4} = 1001$, which constitutes a highly under determined system given only training samples $m=300$. Because the performance gap spans multiple orders of magnitude, the resulting histograms are plotted on a logarithmic scale. Consistent with the findings in \cite{ahmadi2023sums}, the comparison of these test errors across the $100$ random instances clearly demonstrates that enforcing the SPBQ convexity constraint heavily mitigates this overfitting, allowing the SPBQ convex polynomial regression to achieve significantly lower approximation errors than both unconstrained and standard SOS-convex approaches.

\begin{figure}[h]
    \centering
     \includegraphics[width=\textwidth]{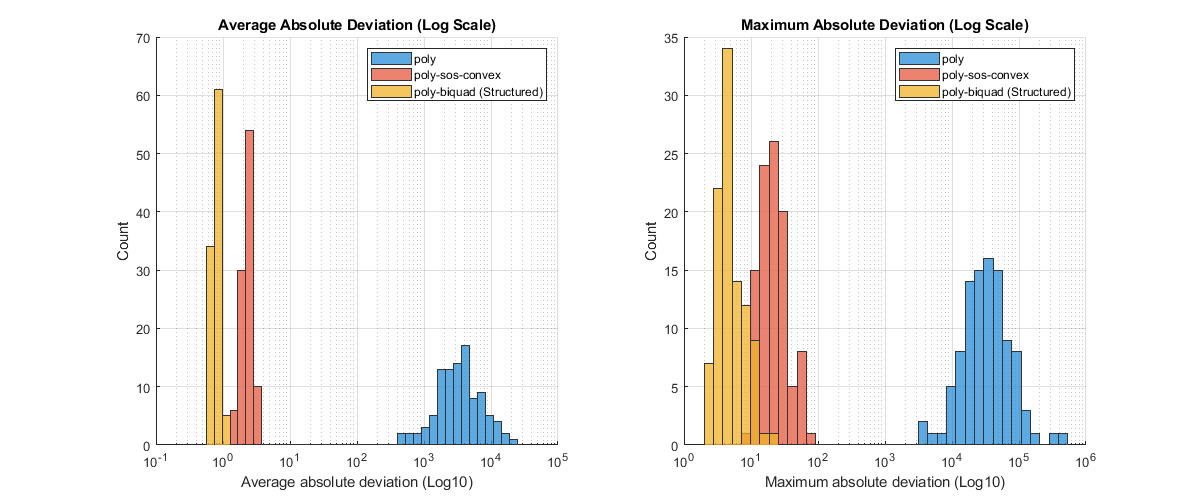}
    \caption{Histograms comparing the mean (left) and maximum (right) absolute deviations of the three regression models across $100$ randomly generated problem instances. The x-axes are displayed on a base-10 logarithmic scale to accommodate the pronounced overfitting behavior observed in the unconstrained polynomial regression model.}
    \label{fig:histograms}
\end{figure}

\section{Application to Fluid Dynamics}\label{fluid app.}
\label{sec:fluid_dynamics}

Investigations of fluid-flow stability and the establishment of bounds on turbulent-flow properties rely extensively on the incompressible Navier–Stokes equations. Because these partial differential equations are infinite-dimensional, a common analytical strategy is to expand the velocity field in a finite-dimensional Galerkin basis composed of orthogonal, divergence-free vector fields. This expansion transforms the original governing equations into a finite system of nonlinear ordinary differential equations of the form
\[
\dot{a}_i = f_i(a) = \frac{1}{Re} \sum_{j} L_{ij} a_j + \sum_{j,k} N_{ijk} a_j a_k,
\]
 where $a \in \mathbb{R}^n$ denotes the amplitudes of the Galerkin modes, $Re$ is the Reynolds number, $L$ characterizes viscous dissipation, and $N$ captures the nonlinear, energy-preserving convective transport. Traditionally, certifying global stability or obtaining rigorous bounds on time-averaged quantities—such as the turbulent energy dissipation rate $\Phi(a)$—relies on constructing suitable Lyapunov or auxiliary functions $V(a)$. Establishing global non-negativity of such polynomial functions is known to be NP-hard. The sum-of-squares (SOS) relaxation has emerged as an effective approach to circumvent this difficulty by replacing the non-negativity constraint with the requirement that the polynomial admits an SOS decomposition, which can be computed efficiently using semidefinite programming (SDP) \cite{chernyshenko2014polynomial}.

In practice, however, standard SOS techniques face a severe computational bottleneck in fluid dynamical applications. The usual formulation searches over a fully dense polynomial basis. For an $n$-dimensional system and a candidate polynomial of degree $d$, the size of this basis grows combinatorially as $\binom{n + d/2}{d/2}$. As the order of the Galerkin truncation is increased to resolve realistic turbulent dynamics, conventional SOS formulations rapidly deplete available memory. To address this challenge, we restrict the search to the SPBQ class, which overcomes the curse of dimensionality while still yielding mathematically rigorous bounds.

We first illustrate the effectiveness of the SPBQ class by computing the global stability envelope of a well-known $9$-dimensional Galerkin model of Couette flow \cite{moehlis2004low}. For a candidate Lyapunov function $V(a)$, global stability requires that $V(a)$ be positive definite and that its time derivative,
\[
\dot{V}(a) = \nabla V(a) \cdot f(a)
\]
is strictly negative definite. Because the vector field $f(a)$ possesses a quadratic nonlinearity, an arbitrary degree-4 function will yield a degree-5 time derivative, which cannot be globally negative definite. To construct a valid SOS constraint, we exploit the energy-conserving property of the fluid nonlinearities and propose a candidate Lyapunov function dominated by our SPBQ structure, supported by lower-degree polynomial buffers. 

\[
V(a) = s(x, y) + b(x, y) + p_3(a) + p_2(a)
\]
Here, the state vector $a$ is partitioned into two distinct sets of physical modes $x \in \mathbb{R}^7$ and $y \in \mathbb{R}^2$, and $s(x,y) + b(x,y)$ constitutes the homogeneous form of the degree-4 SPBQ. 

By conducting a bisection search over the Reynolds number, we evaluated the stability envelope. The classical energy stability limit for this model is $Re_E = 7.5$, while previous studies utilizing a dense full SOS approach achieved a certified maximum bound of $Re_{SOS} = 54.1$ \cite{goulart2012global}. The structured SPBQ formulation successfully certifies global stability up to $Re \approx 12.50$. This confirms that the highly restricted $(7,2)$ bipartite structure retains sufficient degrees of freedom to capture critical nonlinear energy transfers and strictly improve the bounds of standard linear energy stability.

Although the stability envelope demonstrates the theoretical validity of the SPBQ structure, its most significant advantage lies in computational scalability. We benchmark this scalability through the calculation of turbulent energy dissipation bounds. Following the background-flow methodology, finding a rigorous lower bound $C$ on the time-averaged dissipation rate $\overline{\Phi}$ is formulated as finding a function $V(a)$ such that 
\[D(a) = \dot{V}(a) + \Phi(a) - C \ge 0
\]
By maximizing $C$ subject to $D(a)$ being SOS, one obtains the optimal bound. We conducted a scaling benchmark comparing the standard dense degree-4 SOS formulation, using native compiler tools, against the SPBQ formulation utilizing our direct LMI construction. The experiments were conducted on Galerkin projections ranging from $n=10$ to $n=30$. The SPBQ method enforced the partition $y \in \mathbb{R}^2$ and $x \in \mathbb{R}^{n-2}$, effectively neutralizing the combinatorial explosion of the Gram matrix.

\begin{table}[ht]
\centering
\begin{tabular}{|c | c | c | c |}
\hline\hline
$n$ & Standard SOS Time(s) & SPBQ Time(s) & Speedup \\
\hline
10 & 5.22 & 0.73 & 7.2$\times$ \\
15 & 18.69 & 0.36 & 52.5$\times$ \\
20 & Out of Memory & 0.70 & --- \\
25 & Out of Memory & 1.37 & --- \\
30 & Out of Memory & 2.65 & --- \\
\hline\hline
\end{tabular}
\caption{Computational scaling of the turbulent energy dissipation bound: standard SOS runs out of memory for $n \ge 20$, whereas the SPBQ direct LMI scales efficiently to high-dimensional models.}
\label{tab:scaling_benchmark_fluid}
\end{table}

As reported in Table \ref{tab:scaling_benchmark_fluid}, the computational benefits grow exponentially. For a 15-mode system, the SPBQ formulation delivers a high speed relative to the conventional method. More importantly, once the state dimension reaches $n=20$ and higher, constructing the standard dense degree-4 Gram matrix fully depletes system RAM, causing the solver to fail. In sharp contrast, by avoiding symbolic polynomial parsing and instead employing the explicit Direct LMI mapping, the SPBQ approach is able to compute the dissipation bound for a 30-mode system in under three seconds. This level of computational efficiency demonstrates that the SPBQ family offers a highly tractable framework for polynomial optimization in high-dimensional fluid dynamics problems.
\section{Conclusion}

In this work, we introduced and rigorously defined Sum of Separable and Biquadratic (SPBQ) polynomials, thereby establishing a systematic framework for analysing nonnegativity and convexity in specialised quartic forms. We derived a precise theoretical characterisation by proving that, for SPBQ forms whose biquadratic part has dimension $n \times 2$, convexity and SOS-convexity coincide. We then constructed an explicit $3 \times 3$ counterexample to pinpoint the boundary of this equivalence, showing that in higher dimensions convex SPBQ forms are not necessarily SOS-convex.

In addition to these theoretical insights, we showed that the SPBQ framework offers substantial computational benefits. Leveraging the structural properties of SPBQ polynomials, we can drastically reduce the size of the required semidefinite constraints. This reduction translates into major computational speedups, enabling the solution of large-scale optimisation problems while overcoming the memory bottlenecks that typically hinder more general SOS formulations.

Taken together, our results support efficient global optimisation over a novel class of quartic polynomials, paving the way for scalable and promising applications in fields such as convex polynomial regression, fluid dynamics and others.

\bibliographystyle{apalike} 
\bibliography{reference}

\newpage
\section*{Appendix A}\label{Appendix:A}

Using the structural symmetries \cite{parrilo2005exploiting, gatermann2004symmetry} of the polynomial \eqref{counter}, the entire block of $286 \times 286$ Gram matrix diagonalizes into 142 independent components. The numerical evaluation reveals that the rational coefficients corresponding to the 140 isolated $1 \times 1$ blocks are exactly 0. Consequently, the SOS decomposition is completely simplified to two dense $73 \times 73$ blocks:
\begin{equation}
(x_1^2+x_2^2 +x_3^2 + y_1^2+y_2^2 + y_3^2)z^TH(x,y)z = z_A^T Q_A z_A + z_B^T Q_B z_B,
\end{equation}
where $Q_A, Q_B \in \mathbb{Q}^{73 \times 73}$ are rational symmetric matrices, and $z_A, z_B$ are the symmetry-adapted monomial basis vectors given by:
\begin{align*}
z_A = [& z_3 z_5 z_6,\; z_3 z_4 z_6,\; z_3 z_4 z_5,\; z_2 z_5 z_6,\; z_2 z_4 z_6,\; z_2 z_4 z_5,\; z_1 z_5 z_6,\; z_1 z_4 z_6,\; z_1 z_4 z_5,\; z_1 z_2 z_3,\; y_3^2 z_3,\; y_3^2 z_2,\\
& y_3^2 z_1,\; y_2 y_3 z_3,\; y_2 y_3 z_2,\; y_2 y_3 z_1,\; y_2^2 z_3,\; y_2^2 z_2,\; y_2^2 z_1,\; y_1 y_3 z_3,\; y_1 y_3 z_2,\; y_1 y_3 z_1,\; y_1 y_2 z_3,\; y_1 y_2 z_2,\\
& y_1 y_2 z_1,\; y_1^2 z_3,\; y_1^2 z_2,\; y_1^2 z_1,\; x_3 y_3 z_6,\; x_3 y_3 z_5,\; x_3 y_3 z_4,\; x_3 y_2 z_6,\; x_3 y_2 z_5,\; x_3 y_2 z_4,\; x_3 y_1 z_6,\; x_3 y_1 z_5,\\
& x_3 y_1 z_4,\; x_3^2 z_3,\; x_3^2 z_2,\; x_3^2 z_1,\; x_2 y_3 z_6,\; x_2 y_3 z_5,\; x_2 y_3 z_4,\; x_2 y_2 z_6,\; x_2 y_2 z_5,\; x_2 y_2 z_4,\; x_2 y_1 z_6,\; x_2 y_1 z_5,\\
& x_2 y_1 z_4,\; x_2 x_3 z_3,\; x_2 x_3 z_2,\; x_2 x_3 z_1,\; x_2^2 z_3,\; x_2^2 z_2,\; x_2^2 z_1,\; x_1 y_3 z_6,\; x_1 y_3 z_5,\; x_1 y_3 z_4,\; x_1 y_2 z_6,\; x_1 y_2 z_5,\\
& x_1 y_2 z_4,\; x_1 y_1 z_6,\; x_1 y_1 z_5,\; x_1 y_1 z_4,\; x_1 x_3 z_3,\; x_1 x_3 z_2,\; x_1 x_3 z_1,\; x_1 x_2 z_3,\; x_1 x_2 z_2,\; x_1 x_2 z_1,\; x_1^2 z_3,\; x_1^2 z_2,\\
&  x_1^2 z_1]^T
\end{align*}
\begin{align*}
    z_B = [& z_4 z_5 z_6,\; z_2 z_3 z_6,\; z_2 z_3 z_5,\; z_2 z_3 z_4,\; z_1 z_3 z_6,\; z_1 z_3 z_5,\; z_1 z_3 z_4,\; z_1 z_2 z_6,\; z_1 z_2 z_5,\; z_1 z_2 z_4,\; y_3^2 z_6,\; y_3^2 z_5,\\
    & y_3^2 z_4,\; y_2 y_3 z_6,\; y_2 y_3 z_5,\; y_2 y_3 z_4,\; y_2^2 z_6,\; y_2^2 z_5,\; y_2^2 z_4,\; y_1 y_3 z_6,\; y_1 y_3 z_5,\; y_1 y_3 z_4,\; y_1 y_2 z_6,\; y_1 y_2 z_5,\\
    & y_1 y_2 z_4,\; y_1^2 z_6,\; y_1^2 z_5,\; y_1^2 z_4,\; x_3 y_3 z_3,\; x_3 y_3 z_2,\; x_3 y_3 z_1,\; x_3 y_2 z_3,\; x_3 y_2 z_2,\; x_3 y_2 z_1,\; x_3 y_1 z_3,\; x_3 y_1 z_2,\\
    & x_3 y_1 z_1,\; x_3^2 z_6,\; x_3^2 z_5,\; x_3^2 z_4,\; x_2 y_3 z_3,\; x_2 y_3 z_2,\; x_2 y_3 z_1,\; x_2 y_2 z_3,\; x_2 y_2 z_2,\; x_2 y_2 z_1,\; x_2 y_1 z_3,\; x_2 y_1 z_2,\\
    & x_2 y_1 z_1,\; x_2 x_3 z_6,\; x_2 x_3 z_5,\; x_2 x_3 z_4,\; x_2^2 z_6,\; x_2^2 z_5,\; x_2^2 z_4,\; x_1 y_3 z_3,\; x_1 y_3 z_2,\; x_1 y_3 z_1,\; x_1 y_2 z_3,\; x_1 y_2 z_2,\\
    & x_1 y_2 z_1,\; x_1 y_1 z_3,\; x_1 y_1 z_2,\; x_1 y_1 z_1,\; x_1 x_3 z_6,\; x_1 x_3 z_5,\; x_1 x_3 z_4,\; x_1 x_2 z_6,\; x_1 x_2 z_5,\; x_1 x_2 z_4,\; x_1^2 z_6,\; x_1^2 z_5,\\
    & x_1^2 z_4]^T.
\end{align*}

To rigorously certify that $Q_A \succeq 0$ and $Q_B \succeq 0$, we compute the exact rational $LDL^T$ decompositions:
\begin{equation}
Q_A = L_A D_A L_A^T, \quad Q_B = L_B D_B L_B^T.
\end{equation}

Due to their dimensionality $73 \times 73$, the exact rational matrices $L_A, D_A$ and $L_B, D_B$ are too large to be typed in the print. However, we verify that every element of the exact rational diagonal matrices $D_A$ and $D_B$ is strictly positive\footnote{\href{https://colab.research.google.com/drive/1cx1uTwxm6uVLFumnlWa1yuK71x3BMLEu\#scrollTo=eVVM-Uc0xZ8g}{Rational decomposition}}. This provides an exact analytical proof that $Q_A \succeq 0$ and $Q_B \succeq 0$, globally certifying the convexity of $f$.

\section*{Appendix B}\label{Appendix:B}

\begin{align*}
z^T \nabla^2 f z =\;
& 0.2\Big[(y_1 - y_2)z_{x_1} + x_1(z_{y_1} - z_{y_2})\Big]^2 \\
&+ 0.2\Big[(y_2 - y_3)z_{x_2} + x_2(z_{y_2} - z_{y_3})\Big]^2 \\
&+ 0.2\Big[(y_3 - y_1)z_{x_3} + x_3(z_{y_3} - z_{y_1})\Big]^2 \\
&+ 0.2\Big[(x_1 - x_2)z_{y_1} + y_1(z_{x_1} - z_{x_2})\Big]^2 \\
&+ 0.2\Big[(x_2 - x_3)z_{y_2} + y_2(z_{x_2} - z_{x_3})\Big]^2 \\
&+ 0.2\Big[(x_3 - x_1)z_{y_3} + y_3(z_{x_3} - z_{x_1})\Big]^2 \\
&+ 0.4\Big[x_1z_{x_1} + y_2z_{y_2}\Big]^2
+ 0.4\Big[x_2z_{x_2} + y_3z_{y_3}\Big]^2
+ 0.4\Big[x_3z_{x_3} + y_1z_{y_1}\Big]^2 \\
&+ 0.4\Big[x_2z_{x_2} + y_1z_{y_1}\Big]^2
+ 0.4\Big[x_3z_{x_3} + y_2z_{y_2}\Big]^2
+ 0.4\Big[x_1z_{x_1} + y_3z_{y_3}\Big]^2 \\
&+ 0.4\Big[x_1z_{x_1} + y_2z_{y_2}\Big]^2
+ 0.4\Big[x_2z_{x_2} + y_3z_{y_3}\Big]^2
+ 0.4\Big[x_3z_{x_3} + y_1z_{y_1}\Big]^2 \\
&+ 0.4\Big[x_2z_{x_2} + y_1z_{y_1}\Big]^2
+ 0.4\Big[x_3z_{x_3} + y_2z_{y_2}\Big]^2
+ 0.4\Big[x_1z_{x_1} + y_3z_{y_3}\Big]^2 \\
&+ 0.2\Big[x_1z_{x_1} + y_1z_{y_1}\Big]^2
+ 0.2\Big[x_1z_{x_1} + y_2z_{y_2}\Big]^2
+ 0.2\Big[x_2z_{x_2} + y_2z_{y_2}\Big]^2 \\
&+ 0.2\Big[x_2z_{x_2} + y_3z_{y_3}\Big]^2
+ 0.2\Big[x_3z_{x_3} + y_3z_{y_3}\Big]^2
+ 0.2\Big[x_3z_{x_3} + y_1z_{y_1}\Big]^2 \\
&+ 0.2\Big[x_1z_{x_1} + y_1z_{y_1}\Big]^2
+ 0.2\Big[x_2z_{x_2} + y_1z_{y_1}\Big]^2
+ 0.2\Big[x_2z_{x_2} + y_2z_{y_2}\Big]^2 \\
&+ 0.2\Big[x_3z_{x_3} + y_2z_{y_2}\Big]^2
+ 0.2\Big[x_3z_{x_3} + y_3z_{y_3}\Big]^2
+ 0.2\Big[x_1z_{x_1} + y_3z_{y_3}\Big]^2 \\
&+ 0.4y_2^2z_{x_1}^2 + 0.4x_1^2z_{y_2}^2
+ 0.4y_3^2z_{x_2}^2 + 0.4x_2^2z_{y_3}^2 \\
&+ 0.4y_1^2z_{x_3}^2 + 0.4x_3^2z_{y_1}^2
+ 0.4y_1^2z_{x_2}^2 + 0.4x_2^2z_{y_1}^2 \\
&+ 0.4y_2^2z_{x_3}^2 + 0.4x_3^2z_{y_2}^2
+ 0.4y_3^2z_{x_1}^2 + 0.4x_1^2z_{y_3}^2 \\
&+ 9.6\Big(x_1^2z_{x_1}^2 + x_2^2z_{x_2}^2 + x_3^2z_{x_3}^2
+ y_1^2z_{y_1}^2 + y_2^2z_{y_2}^2 + y_3^2z_{y_3}^2\Big) \\
&- 0.4x_1y_2z_{x_1}z_{y_1}
- 0.4x_1y_1z_{x_1}z_{y_2}
- 0.4x_2y_3z_{x_2}z_{y_2}
- 0.4x_2y_2z_{x_2}z_{y_3} \\
&- 0.4x_3y_1z_{x_3}z_{y_3}
- 0.4x_3y_3z_{x_3}z_{y_1}
- 0.4x_1y_1z_{x_2}z_{y_1}
- 0.4x_2y_1z_{x_1}z_{y_1} \\
&- 0.4x_2y_2z_{x_3}z_{y_2}
- 0.4x_3y_2z_{x_2}z_{y_2}
- 0.4x_3y_3z_{x_1}z_{y_3}
- 0.4x_1y_3z_{x_3}z_{y_3}.
\end{align*}

\end{document}